\documentclass[10pt,fleqn]{article}
\usepackage[dvips]{epsfig}
\input{epsf}
\usepackage{amsmath,amssymb,amsfonts}
\usepackage{amscd}
\usepackage[dvips,matrix,arrow]{xy}



\numberwithin{equation}{subsection}
\numberwithin{figure}{subsection}

\newtheorem{theorem}{Theorem}[subsection]
\newtheorem{lemma}[theorem]{Lemma}
\newtheorem{remark}[theorem]{Remark}
\newtheorem{proposition}[theorem]{Proposition}
\newtheorem{corollary}[theorem]{Corollary}
\newtheorem{example}[theorem]{Example}

\def\var{\operatorname{Var}}%
\def\inf{\operatorname{inf}}%
\def\skew{\operatorname{Skew}}%
\def\kurt{\operatorname{Kurt}}%

\newenvironment{proof}[1][Proof:]{\begin{trivlist}
\item[\hskip \labelsep {\bfseries #1}]}{\end{trivlist}}

\newcommand{\qed}{\nobreak \ifvmode \relax \else
      \ifdim\lastskip<1.5em \hskip-\lastskip
      \hskip1.5em plus0em minus0.5em \fi \nobreak
      \vrule height0.75em width0.5em depth0.25em\fi}

 0 3 \font\ccc =msbm10 

\date{ }

\begin{document}

\title{On a Generalization of Markowitz \\Preference Relation}

\author{Valentin Vankov Iliev\footnote{Partially
supported by Grand I 02/18 of the Bulgarian Science Fund.}\\
Institute of Mathematics and Informatics,\\
Bulgarian Academy of Sciences, Sofia 1113, Bulgaria}

\maketitle

\begin{abstract}

Given two families of continuous functions $u=(u_p)_{p\in I}$ and
$v=(v_q)_{q\in J}$ on a topological space $X$, we define a preorder
$R=R(u,v)$ on $X$ by the condition that any member of $u$ is an
$R$-increasing and any member of $v$ is an $R$-decreasing function.
It turns out that if the topological space $X$ is quasi-compact and
sequentially compact, then any element $x\in X$ is $R$-dominated by
an $R$-maximal element $m\in X$: $xRm$. In particular, since the
$(n-1)$-dimensional simplex is a compact subset of $\hbox{\ccc
R}^n$, then considering its members as portfolios consisting of $n$
financial assets, we obtain the classical 1952 result of Harry
Markowitz that any portfolio is dominated by an efficient portfolio.
Moreover, several other examples of possible application of this
general setup are presented.

\end{abstract}

\section{Markowitz Optimization}

\label{5}

\subsection{Return of a Portfolio}

\label{1.1}

Let $\Delta_{n-1}=\{\left(x_1,\ldots,x_n\right)\in\hbox{\ccc
R}_+^n\mid \sum_{i=1}^nx_i=1\}$ be the $n-1$-dimensional simplex
and let $[n]=\{1,\ldots,n\}$. The ordered pairs $([n],x)$,
$x\in\Delta_{n-1}$, are sample spaces with set of outcomes $[n]$
and probability assignment $x\colon [n]\to\hbox{\ccc R}$,
$x(i)=x_i$, $i=1,\ldots,n$. The set of all sample spaces of this
form can be identified with the $n-1$-dimensional simplex
$\Delta_{n-1}$ and also are said to be \emph{$(n-1)$-dimensional
lotteries} or \emph{$(n-1)$-dimensional portfolios}.

Given a sample space $S$ with probability $P$, let $s_1,\ldots,s_n$
be random variables on $S$ with expected values
$\mu_1,\ldots,\mu_n$, respectively. For any portfolio
$x\in\Delta_{n-1}$ the weighted sum $s(x)=x_1s_1+\cdots +x_ns_n$ is
a random variable with expected value $u(x)=E(s(x))=x_1\mu_1+\cdots
+x_n\mu_n$ and the variance $v(x)=\var(s(x))$ is a non-negative
quadratic form in $x_1,\ldots,x_n$.

\begin{remark} \label{1.1.1} {\rm Below we interpret $i\in [n]$
as \emph{financial assets}, the sample space $S$ as a
\emph{financial market}, the random variables $s_i$ on $S$ as
\emph{returns on asset $i$}, $i=1,\ldots,n$, in the end of a fixed
time period, and $s(x)$ as the \emph{return of the portfolio $x$}.
Then $u(x)=E(s(x))$ is the \emph{expected return} and
$v(x)=\var(s(x))$ is the \emph{risk} (or, the \emph{volatility})
of the portfolio $x$ --- see, for example,~\cite[2.1]{[10]}.

}

\end{remark}

\subsection{Markowitz Preferences}

\label{5.1}

Let $x\in\Delta_{n-1}$ be a portfolio and $u(x)=E(s(x))$ and
$v(x)=\var(s(x))$ be the expected return and the volatility of
$x$. The Markowitz's approach to portfolio selection is based on
the following definition of preference $R$ on the set
$\Delta_{n-1}$ of portfolios: $xRy$ if $u(x)\leq u(y)$ and
$v(y)\leq v(x)$. Non-formally, $xRy$ means that the portfolio $y$
is at least as good as $x$. The symmetric part $E$ of the preorder
$R$ is
\[
E=\{(x,y)\in \Delta_{n-1}^2\mid u(x)=u(y)\hbox{\rm\ and\
}v(y)=v(x)\}
\]
and the asymmetric part $F$ of $R$ is $F=R\backslash E$. Thus,
$xFy$ if and only if either $u(x)<u(y)$ and $v(y)\leq v(x)$ or
$u(x)\leq u(y)$ and $v(y)<v(x)$. Non-formally, $xFy$ means  that
the portfolio $y$ is definitely better than the portfolio $x$.

In~\cite[p. 82]{[5]} H.~Markowitz gives (up to notation) the
following definition:

The portfolio $x$ is said to be \emph{efficient} if
\begin{equation}
u(x)=\max_{y\in\Delta_{n-1},v\left(y\right)\leq
v\left(x\right)}u(y)\hbox{\rm\ and\ }
v(x)=\min_{y\in\Delta_{n-1},u\left(y\right)\geq
u\left(x\right)}v(y).\label{5.1.1}
\end{equation}
In other words, for any portfolio $y\in\Delta_{n-1}$ the inequality
$v(y)\leq v(x)$ implies the inequality $u(x)\geq u(y)$ and the
inequality $u(y)\geq u(x)$ implies the inequality $v(x)\leq v(y)$.
The negation of the last statement is: There exists
$y\in\Delta_{n-1}$ such that $xFy$, that is, the portfolio $x$ is
not $R$-maximal.

Thus, we see that $x$ is Markowitz's efficient portfolio if and only
if $x$ is $R$-maximal --- this is our setup.

\section{Generalization}

\label{2}

In this section we present a wide generalization of Markowitz's
preference relation, defined in~\ref{5.1}. Using Kuratowski-Zorn
Theorem (equivalent to the Axiom of Choice), we show that any member
of this preference structure is dominated by a maximal element
(\emph{generalized efficient portfolio}). In particular, the set of
generalized efficient portfolios is not empty.

\subsection{A Preorder on a Topological Space}

\label{2.1}

Let $X$ be a topological space and let $u=(u_p)_{p\in I}$ and
$v=(v_q)_{q\in J}$ be two families of continuous real functions on
$X$. We define a preorder $R=R(u,v)$ on $X$ in the following way:
\begin{equation}
R=\{(x,y)\in X^2\mid u_p(x)\leq u_p(y)\hbox{\rm\ and\ } v_q(x)\geq
v_q(y)\hbox{\rm\ for all\ }p\in I,q\in J\}.\label{2.1.1}
\end{equation}
Then for the symmetric part $E$ of $R$ (an equivalence relation) one
has
\[
E=\{(x,y)\in X^2\mid u_p(x)=u_p(y)\hbox{\rm\ and\ } v_q(x)=
v_q(y)\hbox{\rm\ for all\ }p\in I,q\in J\}
\]
and for the asymmetric part $F$ of $R$ (an asymmetric and transitive
relation) one has $F=R\backslash E$. Thus, $xFy$ means $xRy$ and
either there exists index $p_0\in I$ with $u_{p_0}(x)<u_{p_0}(y)$ or
there exists index $q_0\in J$ with $v_{q_0}(x)>v_{q_0}(y)$.

On the account of repetitions of functions within one family and
adding the negatives of functions from one family to the other, we
can assume that both families have the same set of indices,
$u=(u_p)_{p\in I}$, $v=(v_p)_{p\in I}$, without changing the
corresponding preorder on $X$. Moreover, on the account of adding a
third countable family of continuous functions on $X$ to both
families, the corresponding preorder can be defined by two systems
of inequalities and a system of equalities.

Below, if the opposite is in not stated, the families $u=(u_p)_{p\in
I}$ and $v=(v_p)_{p\in I}$ have the same index set.

\subsection{Maximal Elements}

\label{2.5}

In order to fix the terminology, we remind several definitions. A
topological space $X$ is called \emph{quasi-compact} if every open
covering of $X$ contains a finite open covering. The space $X$ is
called \emph{compact} if it is quasi-compact and Hausdorff, and
\emph{sequentially compact} if any infinite sequence of elements
of $X$ has a converging subsequence.

It is well known (see, for example,~\cite[Sec. 1]{[15]}) that any
compact and first countable space is sequentially compact and that
every Lindel\"of, sequentially compact (and Hausdorf) space is
quasi-compact (compact).

Given a prerder $R$ on the set $X$, a subset $C\subset X$ is said to
be \emph{chain} in $X$ if the induced preorder on $C$ is complete. A
preordered set $X$ is called \emph{inductive} if every chain in $X$
has an upper bound.

Below, if the opposite is not stated, we suppose that the
topological space $X$ is furnished with the preorder $R$ produced by
the families of continuous functions $u=(u_p)_{p\in I}$ and
$v=(v_p)_{p\in I}$.

The sequence $(x_\iota)_{\iota=1}^\infty$, $x_\iota\in X$, is said
to be \emph{$R$-increasing} (respectively, \emph{strictly
$R$-increasing}) if $x_\iota Rx_{\iota +1}$ (respectively, $x_\iota
Fx_{\iota +1}$) for all $\iota\geq 1$. By analogy, we define
\emph{$R$-decreasing} (respectively, \emph{strictly $R$-decreasing})
sequences.

Given an $R$-chain $C\subset X$, for any $p\in I$ and any real
number $r\in\hbox{\ccc R}$ we set:
\[
M_p=\sup_{x\in C}u_p(x),\hbox{\ } m_p=\inf_{x\in C}v_p(x),
\]
\[
C_p=\{x\in C\mid u_p(x)=M_p\}, \hbox{\ }C_p^{\left
(-\right)}=\{x\in C\mid u_p(x)<M_p\},
\]
\[
c_p=\{x\in C\mid v_p(x)=m_p\},\hbox{\ }c_p^{\left
(+\right)}(r)=\{x\in C\mid v_p(x)>m_p\}.
\]
Finally, we denote $C_p^*=\{x\in X\mid u_p(x)=M_p\}$,
$c_p^*=\{x\in X \mid v_p(x)=m_p\}$, so $C_p\subset C_p^*$ and
$c_p\subset c_p^*$. Note that $C=C_p\cup C_p^{\left
(-\right)}=c_p\cup c_p^{\left (+\right)}$.

\begin{lemma}\label{2.5.5} Let $p,q\in I$.

{\rm (i)} One has $c_p\subset C_p$ or $C_p\subset c_p$.

{\rm (ii)} One has $c_p\cap C_p\subset c_q\cap C_q$ or $c_q\cap
C_q\subset c_p\cap C_p$.

\end{lemma}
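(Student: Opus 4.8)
The plan is to isolate a single order-theoretic fact and reduce both statements to it. The key observation is that, because $C$ is an $R$-chain (the induced preorder is complete), each of the sets $C_p$ and $c_p$ is \emph{up-closed} in $C$: if $x$ lies in the set and $xRy$ with $y\in C$, then $y$ lies in the set as well. For $C_p$ this holds because $xRy$ gives $u_p(x)\leq u_p(y)$, so $M_p=u_p(x)\leq u_p(y)\leq M_p$, forcing $u_p(y)=M_p$; for $c_p$ it is the dual computation, using $v_p(x)\geq v_p(y)$ together with the infimum $m_p$. Thus the monotonicity of $u_p$ and the anti-monotonicity of $v_p$ along the chain turn ``achieving the extremum'' into ``being an upper segment'' of $C$.

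The second ingredient is a general lemma: in any complete preorder, any two up-closed subsets are comparable by inclusion. I would prove this directly. Suppose $A,B\subset C$ are up-closed and $A\not\subset B$, and pick $a\in A\setminus B$. For an arbitrary $b\in B$, completeness gives $aRb$ or $bRa$. The alternative $bRa$ is impossible, since then up-closedness of $B$ would force $a\in B$, contradicting the choice of $a$; hence $aRb$ holds, and up-closedness of $A$ then yields $b\in A$. As $b\in B$ was arbitrary, $B\subset A$.

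Part (i) is now immediate: $C_p$ and $c_p$ are up-closed subsets of the chain $C$, so one is contained in the other. For part (ii) I would first note that an intersection of two up-closed sets is again up-closed, whence both $c_p\cap C_p$ and $c_q\cap C_q$ are up-closed in $C$; applying the general lemma once more gives the desired comparison.

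I expect no genuinely hard step: once the relevant sets are recognized as upper segments of the totally preordered chain, comparability is forced. The only point demanding slight care is that $R$ is a preorder rather than a partial order, so the argument must never invoke antisymmetry — and the direct proof of the general lemma avoids it, relying only on completeness of the induced preorder on $C$ and on up-closedness.
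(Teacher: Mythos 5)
Your proof is correct, and it takes a genuinely different route from the paper's. The paper proves each part by direct case analysis inside the chain: for (i) it splits on whether $v_p(x)=m_p$ holds for every $x\in C_p$ (which gives $C_p\subset c_p$), and otherwise picks $x\in C_p$ with $v_p(x)>m_p$, so that $v_p(y)<v_p(x)$ for each $y\in c_p$; comparability of $x$ and $y$ then rules out $yRx$, forces $xRy$, and hence $u_p(y)=M_p$, i.e.\ $c_p\subset C_p$. Part (ii) is then handled by a second, analogous case analysis (splitting on whether some $x\in c_p\cap C_p$ has $v_q(x)>m_q$ or $u_q(x)<M_q$). You instead factor out the order-theoretic content: $C_p$ and $c_p$ are up-closed subsets of the chain $C$ (because $u_p$ is $R$-increasing and $v_p$ is $R$-decreasing), any two up-closed subsets of a complete preorder are nested, and up-closedness is preserved under intersection. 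The engine is the same in both proofs --- your step ``$bRa$ is impossible, hence $aRb$, hence $b\in A$'' is exactly the paper's comparability move with $x$ and $y$ --- but your decomposition makes (ii) a formal consequence of the same general lemma rather than a separate argument, silently covers the degenerate cases $C_p=\emptyset$ or $c_p=\emptyset$ (vacuous for up-sets, whereas the paper's phrasing leaves them implicit), and even yields Corollary~\ref{2.5.10} with no induction: the finitely many up-sets $c_{p_1},C_{p_1},\ldots,c_{p_k},C_{p_k}$ are pairwise nested, hence totally ordered by inclusion, so their intersection is the smallest of them. What the paper's concrete route buys is the explicit strict inequalities ($v_p(y)<v_p(x)$ and the like) in the style reused later in Lemma~\ref{2.5.20}; what yours buys is modularity, uniformity across (i) and (ii), and the correctly flagged observation that antisymmetry is never needed --- a relevant point, since $R$ is only a preorder.
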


\begin{proof} {\rm (i)} If $v_p(x)=m_p$ for all $x\in C_p$,
then $C_p\subset c_p$. Otherwise, there exists $x\in C_p$ with
$v_p(x)>m_p$ and, hence, $v_p(y)<v_p(x)$ for all $y\in c_p$. Since
any $y\in c_p$ is $R$-comparable with $x$, we have $u_p(y)\geq
u_p(x)=M_p$, that is, $y\in C_p$. In other words, $c_p\subset C_p$.

{\rm (ii)} If $v_q(x)=m_q$ and $u_q(x)=M_q$ for all $x\in c_p\cap
C_p$, then $c_p\cap C_p\subset c_q\cap C_q$. Otherwise, there
exists $x\in c_p\cap C_p$ with $v_q(x)>m_q$ or $u_q(x)<M_q$. If
$v_q(x)>m_q$ (respectively, $u_q(x)<M_q$), then $v_q(y)<v_q(x)$
(respectively, $u_q(x)<u_q(y)$) for all $y\in c_q\cap C_q$. Since
$x$ and $y$ are $R$-comparable, in both cases we have $u_p(y)\geq
u_p(x)=M_p$ and $m_p=v_p(x)\geq v_p(y)$. In other words, $y\in
c_p\cap C_p$ for all $y\in c_q\cap C_q$.

\end{proof}

Let us fix a positive integer $s$ and a finite subset
$\{p_1,\ldots,p_s\}\subset I$.

Using Lemma~\ref{2.5.5}, {\rm (i)}, {\rm (ii)}, and induction, we
obtain immediately the following:

\begin{corollary}\label{2.5.10} The intersection $c_{p_1}\cap
C_{p_1}\cap\ldots\cap c_{p_k}\cap C_{p_k}$ is equal to one of the
sets $c_{p_1}$, $C_{p_1}$, $\ldots$, $c_{p_k}$, $C_{p_k}$ for all
$k\leq s$.

\end{corollary}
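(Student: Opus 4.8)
The plan is to prove this by induction on $k$, with Lemma~\ref{2.5.5} supplying both the base case and the inductive step. I should first observe that the statement is really about a finite collection of sets, each of which is one of the special subsets $c_{p_j}$ or $C_{p_j}$ of the chain $C$, and the claim is that their intersection always collapses to a single one of these sets. The key structural fact I will exploit is a nesting property: any two of the relevant sets are comparable by inclusion.

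For the base case $k=1$, the intersection is $c_{p_1}\cap C_{p_1}$, and by Lemma~\ref{2.5.5}, {\rm (i)} (applied with $p=p_1$) we have $c_{p_1}\subset C_{p_1}$ or $C_{p_1}\subset c_{p_1}$; in either case the intersection equals the smaller of the two, which is one of $c_{p_1}$, $C_{p_1}$, as required. For the inductive step, I assume that for some $k<s$ the intersection
\[
T_k=c_{p_1}\cap C_{p_1}\cap\ldots\cap c_{p_k}\cap C_{p_k}
\]
already equals one of the listed sets, say $T_k=c_{p_{j}}\cap C_{p_{j}}$ — more precisely, it equals either $c_{p_j}$ or $C_{p_j}$ for some $j\le k$, hence in particular $T_k=c_{p_j}\cap C_{p_j}$ whenever it equals the intersection for that index (and I will handle the two subcases uniformly). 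I then want to show that $T_{k+1}=T_k\cap c_{p_{k+1}}\cap C_{p_{k+1}}$ is again one of the listed sets.

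The heart of the step is Lemma~\ref{2.5.5}, {\rm (ii)}, which with $p=p_j$ and $q=p_{k+1}$ gives the inclusion
\[
c_{p_j}\cap C_{p_j}\subset c_{p_{k+1}}\cap C_{p_{k+1}}
\quad\text{or}\quad
c_{p_{k+1}}\cap C_{p_{k+1}}\subset c_{p_j}\cap C_{p_j}.
\]
In the first case, $T_k=c_{p_j}\cap C_{p_j}$ is contained in $c_{p_{k+1}}\cap C_{p_{k+1}}$, so $T_{k+1}=T_k$, which by the inductive hypothesis is one of the listed sets. In the second case, $c_{p_{k+1}}\cap C_{p_{k+1}}\subset T_k$, so $T_{k+1}=c_{p_{k+1}}\cap C_{p_{k+1}}$; applying Lemma~\ref{2.5.5}, {\rm (i)} with $p=p_{k+1}$ then collapses this intersection to either $c_{p_{k+1}}$ or $C_{p_{k+1}}$, again one of the listed sets. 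This closes the induction.

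The step I expect to require the most care is the bookkeeping in the inductive hypothesis, namely keeping track of which single set $T_k$ has collapsed to and making sure the comparison in part {\rm (ii)} is applied to the correct surviving index $p_j$ rather than to an arbitrary one. This is genuinely a matter of statement rather than substance: because every pair among the sets $\{c_{p_i}\cap C_{p_i}\}$ is totally ordered by inclusion via part {\rm (ii)}, the finite family has a minimum under inclusion, and the intersection of any finite subfamily equals that minimum, which part {\rm (i)} then further identifies with a single $c_{p_i}$ or $C_{p_i}$. Phrased that way the result is immediate from the two parts of Lemma~\ref{2.5.5}, so I do not anticipate any real obstacle beyond presenting the induction cleanly.
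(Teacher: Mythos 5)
Your proof is correct and takes essentially the same route as the paper, which likewise obtains the corollary ``immediately'' from Lemma~\ref{2.5.5}, (i), (ii), and induction on $k$; you have simply written that induction out in full, including the small but necessary observation that once $T_k$ equals $c_{p_j}$ or $C_{p_j}$ it automatically equals $c_{p_j}\cap C_{p_j}$ (since $T_k$ is contained in both sets), which makes part (ii) applicable to the surviving index. Your closing reformulation --- that the pairwise comparability from part (ii) gives the finite family $\{c_{p_i}\cap C_{p_i}\}$ a minimum under inclusion, to which part (i) is then applied --- is a clean equivalent packaging of the same argument, not a different method.
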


Given an $s\geq 1$, in accord with Lemma~\ref{2.5.5}, {\rm (i)},
{\rm (ii)}, and eventual renumbering of the pairs of functions
$u_{p_k}, v_{p_k}$, we order the intersections $c_{p_k}\cap
C_{p_k}$, $k\leq s$, with respect to inclusion from smallest to
largest:
\begin{equation}
c_{p_1}\cap C_{p_1}\subset\cdots\subset c_{p_\ell}\cap
C_{p_\ell}\subset c_{p_{\ell+1}}\cap
C_{p_{\ell+1}}\subset\cdots\subset c_{p_s}\cap
C_{p_s},\label{2.5.15}
\end{equation}
where $c_{p_i}=\emptyset$ or $C_{p_i}=\emptyset$, $1\leq
i\leq\ell$, and $c_{p_{\ell+1}}\cap C_{p_{\ell+1}}\neq\emptyset$.
Below, if the opposite is not stated, after fixing
$\{p_1,\ldots,p_s\}\subset I$, we assume that~(\ref{2.5.15})
holds.

Thus, the existence of $k\leq s$ with $c_{p_k}=\emptyset$ or
$C_{p_k}=\emptyset$ after renumbering implies $\ell\geq 1$, that is,
$c_{p_1}=\emptyset$ or $C_{p_1}=\emptyset$.

\begin{lemma}\label{2.5.20} Let $X$ be a sequentially compact
space and let $C_{p_1}=\emptyset$ (respectively,
$c_{p_1}=\emptyset$).

{\rm (i)} There exists a strictly $R$-increasing and divergent
sequence
\begin{equation}
(x_\iota)_{\iota=1}^\infty,\label{2.5.25}
\end{equation}
with $x_\iota\in C$ and limit $x^*\in X$, such that the sequence of
real numbers $(u_{p_1}(x_\iota))_{\iota=1}^\infty$ is strictly
increasing and diverges to $u_{p_1}(x^*)=M_{p_1}$ and every sequence
of real numbers $(v_q(x_\iota))_{\iota=1}^\infty$, $q\in I$, is
decreasing and diverges to $v_q(x^*)=m_q$ (respectively, the
sequence of real numbers $(v_{p_1}(x_\iota))_{\iota=1}^\infty$ is
strictly decreasing and diverges to $v_{p_1}(x^*)=m_{p_1}$ and every
sequence of real numbers $(u_q(x_\iota))_{\iota=1}^\infty$, $q\in
I$, is increasing and diverges to $u_q(x^*)=M_q$).

{\rm (ii)} Let for the sequence~(\ref{2.5.25}) from part {\rm (i)}
one has $u_{p_1}(x^*)=M_{p_1}$, $u_{p_2}(x^*)=M_{p_2}$,$\ldots,$
$u_{p_k}(x^*)=M_{p_k}$ (respectively, $v_{p_1}(x^*)=m_{p_1}$,
$v_{p_2}(x^*)=m_{p_2}$,$\ldots,$ $v_{p_k}(x^*)=m_{p_k}$), for some
$k<s$. Then either there exists $y\in \cap_{\lambda\in I}
c_\lambda\cap C_{p_1}\cap\ldots\cap C_{p_k}\cap C_{p_{k+1}}$
(respectively, $y\in c_{p_1}\cap\ldots\cap c_{p_k}\cap
c_{p_{k+1}}\cap_{\lambda\in I}C_\lambda$), or there exists a
strictly $R$-increasing and divergent sequence
$(y_\kappa)_{\kappa=1}^\infty$, with $y_\kappa\in C$ and limit
$y^*\in X$, such that $u_{p_1}(y^*)=M_{p_1}$,
$u_{p_2}(y^*)=M_{p_2}$,$\ldots,$ $u_{p_k}(y^*)=M_{p_k}$, and
$v_q(y^*)=m_q$, $q\in I$ (respectively, $v_{p_1}(y^*)=m_{p_1}$,
$v_{p_2}(y^*)=m_{p_2}$,$\ldots,$ $v_{p_k}(y^*)=m_{p_k}$, and
$u_q(x^*)=M_q$, $q\in I$), the sequence of real numbers
$(u_{p_{k+1}}(y_\kappa))_{\kappa=1}^\infty$ is strictly increasing
and diverges to $u_{p_{k+1}}(y^*)=M_{p_{k+1}}$ and every sequence of
real numbers $(v_q(y_\kappa))_{\kappa=1}^\infty$, $q\in I$, is
decreasing and diverges to $v_q(y^*)=m_q$ (respectively, the
sequence of real numbers $(v_{p_{k+1}}(y_\kappa))_{\kappa=1}^\infty$
is strictly decreasing and diverges to
$v_{p_{k+1}}(y^*)=m_{p_{k+1}}$ and every sequence of real numbers
$(u_q(y_\kappa))_{\kappa=1}^\infty$, $q\in I$, is increasing and
diverges to $u_q(y^*)=M_q$).

\end{lemma}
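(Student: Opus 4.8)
The plan is to prove the primal statement (the case $C_{p_1}=\emptyset$); the parenthetical dual is obtained by interchanging the roles of the families $u$ and $v$ and of suprema and infima, so I would only remark on it at the end. Throughout, the working engine is that any two points of the chain $C$ are $R$-comparable, combined with the continuity of the $u_p,v_q$ and the sequential compactness of $X$. The hypothesis $C_{p_1}=\emptyset$ enters as the statement that $M_{p_1}$ is \emph{not attained} on $C$, which is exactly what lets comparability be turned into a one-directional conclusion.

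For part {\rm (i)}: since $C_{p_1}=\emptyset$ we have $u_{p_1}(z)<M_{p_1}$ for every $z\in C$, and there is a sequence in $C$ on which $u_{p_1}$ converges to $M_{p_1}$; passing to a subsequence I may assume the reals $u_{p_1}(x_\iota)$ are strictly increasing. Comparability then upgrades this to a strictly $R$-increasing sequence, since $u_{p_1}(x_\iota)<u_{p_1}(x_{\iota+1})$ rules out $x_{\iota+1}Rx_\iota$ and forces $x_\iota Fx_{\iota+1}$; consequently every $(u_p(x_\iota))_\iota$ is nondecreasing and every $(v_q(x_\iota))_\iota$ is nonincreasing. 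By sequential compactness I extract a convergent subsequence $x_\iota\to x^*$ and relabel; continuity gives $u_{p_1}(x^*)=M_{p_1}$ and shows each $(v_q(x_\iota))_\iota$ decreases to some $L_q\ge m_q$. The one substantial step is $L_q=m_q$: if $L_q>m_q$ there is $z\in C$ with $v_q(z)<L_q\le v_q(x_\iota)$ for all $\iota$, so $zRx_\iota$ fails and hence $x_\iota Rz$, forcing $u_{p_1}(x_\iota)\le u_{p_1}(z)<M_{p_1}$ for all $\iota$; letting $\iota\to\infty$ contradicts $u_{p_1}(x_\iota)\to M_{p_1}$. Thus $v_q(x^*)=m_q$ for every $q\in I$, which is precisely the assertion of {\rm (i)}.

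For part {\rm (ii)} the key observation, already present in {\rm (i)}, is a \emph{cofinality} fact: for each fixed $z\in C$ and all large $\iota$, the relation $x_\iota Rz$ would force $u_{p_1}(x_\iota)\le u_{p_1}(z)<M_{p_1}$, which is impossible, so $zRx_\iota$ for all large $\iota$. Hence the sequence of {\rm (i)} is cofinal in $C$, and letting $\iota\to\infty$ in $u_p(z)\le u_p(x_\iota)$ gives $u_p(z)\le u_p(x^*)$ for every $z$, whence $u_p(x^*)\ge M_p$; together with $u_p(x_\iota)\le M_p$ this yields $u_p(x^*)=M_p$ for \emph{all} $p\in I$ (and $v_q(x^*)=m_q$ as before). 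In particular $u_{p_{k+1}}(x^*)=M_{p_{k+1}}$ holds automatically, so the limit $x^*$ already realizes $M_{p_1},\ldots,M_{p_{k+1}}$ and every $m_q$. I would then take $y^*=x^*$ and read off $(y_\kappa)$ from $(x_\iota)$: if $u_{p_{k+1}}(x_\iota)$ admits a strictly increasing subsequence, that subsequence, being strictly $R$-increasing and convergent to $x^*$, is the second alternative; if instead $u_{p_{k+1}}(x_\iota)$ is eventually equal to $M_{p_{k+1}}$, the approximants stabilize on $C_{p_{k+1}}$ and I would locate the element $y$ of the first alternative using Corollary~\ref{2.5.10} and the nesting (\ref{2.5.15}).

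The hard part will be exactly this degenerate case in which $u_{p_{k+1}}$ already attains $M_{p_{k+1}}$ along the tail of the chain: there no strictly increasing approximants exist, so the full weight of the dichotomy falls on producing the element $y$, and one must match this situation to the first alternative cleanly through Lemma~\ref{2.5.5}, Corollary~\ref{2.5.10} and the ordered intersections (\ref{2.5.15}). The delicacy is that $C_{p_1}=\emptyset$ sits at the very bottom of (\ref{2.5.15}), so the bookkeeping of which intersection collapses to which $c_{p_i}$ or $C_{p_i}$ has to be reconciled with the cofinality already in force; getting the strictness of $(u_{p_{k+1}}(y_\kappa))_\kappa$ and the simultaneous preservation of all $M_{p_i}$ ($i\le k$) and all $m_q$ in the limit right is where the argument requires the most care.
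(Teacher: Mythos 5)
Your part (i) is correct and is essentially the paper's own proof: the same extraction of a sequence with $u_{p_1}(x_\iota)$ strictly increasing to $M_{p_1}$, the same upgrade to a strictly $R$-increasing sequence via comparability, and the same contradiction (a $z\in C$ with $v_{q_0}(z)<L_{q_0}$ forces $x_\iota Rz$, hence $u_{p_1}(x_\iota)\leq u_{p_1}(z)<M_{p_1}$, contradicting $u_{p_1}(x_\iota)\to M_{p_1}$). Your cofinality observation in (ii), however, is a genuinely different route from the paper's, and it is correct and strictly stronger: since $C_{p_1}=\emptyset$ gives $u_{p_1}(z)<M_{p_1}$ for every $z\in C$, comparability forces $zRx_\iota$ for all large $\iota$, whence $u_p(x^*)=M_p$ \emph{and} $v_p(x^*)=m_p$ for \emph{all} $p\in I$ at once. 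The paper instead sets $M'_{p_{k+1}}=\lim_\iota u_{p_{k+1}}(x_\iota)$ and splits into the case $M'_{p_{k+1}}=M_{p_{k+1}}$ (where it concludes $x^*\in\cap_{\lambda}c_\lambda^*\cap C_{p_1}^*\cap\cdots\cap C_{p_{k+1}}^*$) and two subcases of $M'_{p_{k+1}}<M_{p_{k+1}}$ (choosing $y\in C_{p_{k+1}}$ when that set is nonempty, or building a second sequence $(y_\kappa)$ and taking iterated limits when it is empty). Your cofinality fact shows the branch $M'_{p_{k+1}}<M_{p_{k+1}}$ is vacuous for the sequence of part (i), so your argument collapses the entire induction over $k$ and yields in one stroke the point that Proposition~\ref{2.5.50} actually needs.

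There is, though, a concrete flaw in your contingency plan for the degenerate case where $(u_{p_{k+1}}(x_\iota))$ has no strictly increasing subsequence, i.e.\ is eventually constant equal to $M_{p_{k+1}}$. You propose to locate the $y$ of the first alternative via Corollary~\ref{2.5.10} and the nesting~(\ref{2.5.15}); this cannot succeed, because the first alternative's intersection contains the factor $C_{p_1}$, which is empty by the standing hypothesis, so no such $y$ exists, and no renumbering bookkeeping changes that. Worse, in this degenerate case neither \emph{literal} alternative need hold: take $X=[0,1]^2$, $u_1(x_1,x_2)=x_1$, $u_2(x_1,x_2)=x_2$, all $v_q$ constant, and $C=\{(1-1/\iota,\,1)\mid\iota\geq 1\}$, with $p_1=1$, $p_2=2$, $k=1$. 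Then $C_{p_1}=\emptyset$ while $u_{p_2}\equiv 1$ on $C$, so no sequence in $C$ has strictly increasing $u_{p_2}$-values, and the first alternative's set is empty. The defect here is really in the lemma's statement, which conflates the chain-level sets $c_\lambda, C_{p_i}$ with the starred sets $c_\lambda^*, C_{p_i}^*$ that appear both in the paper's own proof and in Proposition~\ref{2.5.50}; the conclusion that is true in the degenerate case, and the one used downstream, is the starred membership $x^*\in\cap_{\lambda\in I}c_\lambda^*\cap C_{p_1}^*\cap\cdots\cap C_{p_{k+1}}^*$ --- which your cofinality argument already delivers with no case analysis at all. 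So the repair is not to chase the unstarred $y$ but to replace the dichotomy by the starred conclusion; with that substitution your proof is complete (and simpler than the paper's), and the dual case follows by the same interchange of $u$ and $v$ that you and the paper both invoke.
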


\begin{proof} Below, when $c_{p_1}=\emptyset$, we replace
$u_q$ with $-v_q$, $v_q$ with $-u_q$, and use the corresponding
proofs in case $C_{p_1}=\emptyset$.

{\rm (i)} Let $C_{p_1}=\emptyset$. Then $M_{p_1}=\sup_{x\in
C_{p_1}^{\left (-\right)}}u_{p_1}(x)$ and we choose
$(x_\iota)_{\iota=1}^\infty$ to be a sequence of members of
$C=C_{p_1}^{\left (-\right)}$ such that the sequence of real numbers
$(u_{p_1}(x_\iota))_{\iota=1}^\infty$ is strictly increasing with
$\lim_{\iota\to\infty}u_{p_1}(x_\iota)=M_{p_1}$. Since the elements
$x_\iota$ $\iota\geq 1$, are pairwise $R$-comparable, it turns out
that the sequences of real numbers
$(u_q(x_\iota))_{\iota=1}^\infty$, $q\in I$, $q\neq p_1$, are
increasing and $(v_q(x_\iota))_{\iota=1}^\infty$, $q\in I$, are
decreasing. Thus, the sequence $(x_\iota)_{\iota=1}^\infty$ is
strictly $R$-increasing. In accord with the sequential compactness
of the topological space $X$, we can suppose that
$(x_\iota)_{\iota=1}^\infty$ diverges to a point $x^*\in X $. Thus,
$u_{p_1}(x^*)=M_{p_1}$. For any $q\in I$ we set
$m'_q=\lim_{\iota\to\infty}v_q(x_\iota)$. Let us suppose
$m_{q_0}<m'_{q_0}$ for some $q_0\in I$ and let $y\in C$ be such that
$v_{q_0}(y)<m'_{q_0}$. In particular, $v_{q_0}(y)<v_{q_0}(x_\iota)$,
hence $u_{p_1}(y)\geq u_{p_1}(x_\iota)$ for all $\iota\geq 1$.
Taking the limit we obtain $u_{p_1}(y)\geq M_{p_1}$, that is, $y\in
C_{p_1}$, which is a contradiction. Therefore $m_q=m'_q$ and
$v_q(x^*)=m_q$ for all $q\in I$.

{\rm (ii)} Let
$M'_{p_{k+1}}=\lim_{\iota\to\infty}u_{p_{k+1}}(x_\iota)$. We have
$M'_{p_{k+1}}\leq M_{p_{k+1}}$ and if $M'_{p_{k+1}}=M_{p_{k+1}}$,
then $u_{p_{k+1}}(x^*)=M_{p_{k+1}}$. In other words,
$x^*\in\cap_{\lambda=1}^\infty c_\lambda^*\cap
C_{p_1}^*\cap\ldots\cap C_{p_k}^*\cap C_{p_{k+1}}^*$. Now, let
$M'_{p_{k+1}}<M_{p_{k+1}}$.

In case $C_{p_{k+1}}\neq\emptyset$, we choose $y\in C_{p_{k+1}}$ and
since $x_\iota$'s and $y$ are $R$-comparable, the inequalities
$u_{p_{k+1}}(x_\iota)\leq M'_{p_{k+1}}<u_{p_{k+1}}(y)$ yield
\begin{equation}
u_q(x_\iota)\leq u_q(y)\label{2.5.30}
\end{equation}
for all $q\in I$, $q\neq p_{k+1}$, and
\begin{equation}
v_q(x_\iota)\geq v_q(y)\label{2.5.35}
\end{equation}
for all $q\in I$. Taking the limit $\iota\to\infty$
in~(\ref{2.5.30}) for all $q=p_1,\ldots,p_k$ and
in~(\ref{2.5.35}) for all $q\in I$, we obtain
$y\in\cap_{\lambda=1}^\infty c_\lambda\cap C_{p_1}\cap\ldots\cap
C_{p_k}\cap C_{p_{k+1}}$.

In case $C_{p_{k+1}}=\emptyset$, there exists a sequence
$(y_\kappa)_{\kappa=1}^\infty$, $y_\kappa\in C$, such that
$M'_{p_{k+1}}<u_{p_{k+1}}(y_\kappa)<M_{p_{k+1}}$, $\kappa\geq 1$,
the sequence of real numbers
$(u_{p_{k+1}}(y_\kappa))_{\kappa=1}^\infty$ is strictly increasing
and diverges to $M_{p_{k+1}}$. In particular,
$u_{p_{k+1}}(x_\iota)<u_{p_{k+1}}(y_\kappa)$ for all
$\iota,\kappa\geq 1$. Since $x_\iota$'s and $y_\kappa$'s are
$R$-comparable, we obtain for all $\iota,\kappa\geq 1$ the
inequalities
\begin{equation}
u_q(x_\iota)\leq u_q(y_\kappa)\leq M_q\label{2.5.40}
\end{equation}
for all $q\neq p_{k+1}$, and
\begin{equation}
v_q(x_\iota)\geq v_q(y_\kappa)\geq m_q\label{2.5.45}
\end{equation}
for all $q\in I$. Since the topological space $X$ is sequentially
compact, we can assume that $(y_\kappa)_{\kappa=1}^\infty$
diverges with limit $y^*\in X$, so $u_{p_{k+1}}(y^*)=M_{p_{k+1}}$.
Taking consecutively the limits $\iota\to\infty$,
$\kappa\to\infty$, in~(\ref{2.5.40}) for all $q=p_1,\ldots,p_k$
and in~(\ref{2.5.45}) for all $q\in I$, we obtain
$y^*\in\cap_{\lambda=1}^\infty c_\lambda^*\cap
C_{p_1}^*\cap\ldots\cap C_{p_k}^*\cap C_{p_{k+1}}^*$.

\end{proof}

\begin{proposition}\label{2.5.50} Let $X$ be a sequentially compact
space endowed with the preorder $R$ from~(\ref{2.1.1}) and let
$C\subset X$ be a chain.

{\rm (i)} For any finite subset $\{p_1,\ldots,p_s\}\subset I$ one
has
\begin{equation}
\cap_{i=1}^sC_{p_i}^*\cap c_{p_i}^*\neq\emptyset.\label{2.5.55}
\end{equation}

{\rm (ii)} If $X$ is, in addition, quasi-compact, then
\begin{equation}
\cap_{p\in I}C_p^*\cap c_p^*\neq\emptyset.\label{2.5.60}
\end{equation}

\end{proposition}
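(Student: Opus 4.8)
The plan is to prove (i) by a finite induction resting on Lemma~\ref{2.5.20}, and then to deduce (ii) from (i) by the finite-intersection-property characterization of quasi-compactness.

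For (i), I would fix $\{p_1,\ldots,p_s\}$ and, using Corollary~\ref{2.5.10}, arrange the sets $c_{p_i}\cap C_{p_i}$ in the nested chain~(\ref{2.5.15}). If $\ell=0$, every $c_{p_i}\cap C_{p_i}$ is nonempty, so the smallest member $c_{p_1}\cap C_{p_1}$ is nonempty and contained in all the rest; any point $x$ of it then satisfies $u_{p_i}(x)=M_{p_i}$ and $v_{p_i}(x)=m_{p_i}$ for every $i$, whence $x\in\cap_{i=1}^sC_{p_i}^*\cap c_{p_i}^*$ and~(\ref{2.5.55}) holds.

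If instead $\ell\geq 1$, then after the renumbering preceding~(\ref{2.5.15}) I may assume $C_{p_1}=\emptyset$, the case $c_{p_1}=\emptyset$ being reduced to this one by the substitution $u_q\mapsto -v_q$, $v_q\mapsto -u_q$ already used in the proof of Lemma~\ref{2.5.20}. Lemma~\ref{2.5.20}, {\rm (i)}, supplies a point $x^*$ with $u_{p_1}(x^*)=M_{p_1}$ and $v_q(x^*)=m_q$ for all $q\in I$, which is the base of the induction. I would then induct on $k$ by applying Lemma~\ref{2.5.20}, {\rm (ii)}: since $C_{p_1}=\emptyset$, its first alternative---which would exhibit an actual chain element lying in $C_{p_1}$---cannot occur, so at every step the lemma yields a point (the limit of a fresh strictly $R$-increasing divergent sequence) realizing $M_{p_1},\ldots,M_{p_{k+1}}$ together with all the infima $m_q$, $q\in I$. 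This sequence again meets the hypotheses of Lemma~\ref{2.5.20}, {\rm (ii)}, so the induction continues, and after at most $s-1$ steps I obtain a single point realizing $M_{p_1},\ldots,M_{p_s}$ and $m_{p_1},\ldots,m_{p_s}$ simultaneously, which is~(\ref{2.5.55}).

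For (ii), each $C_p^*=u_p^{-1}(\{M_p\})$ and each $c_p^*=v_p^{-1}(\{m_p\})$ is closed, being the preimage of a point under a continuous function, so $\{C_p^*\cap c_p^*\}_{p\in I}$ is a family of closed subsets of $X$. Part~(i) is precisely the statement that this family has the finite intersection property, and since $X$ is quasi-compact such a family has nonempty intersection; this gives~(\ref{2.5.60}). I expect the bookkeeping in the induction for (i)---checking at each stage that the first alternative of Lemma~\ref{2.5.20}, {\rm (ii)}, is excluded and that one gains a new supremum while retaining all infima---to be the main obstacle, whereas (ii) is then immediate.
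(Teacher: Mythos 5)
Your proposal is correct and follows essentially the same route as the paper: the case where all $c_{p_i}\cap C_{p_i}$ are nonempty is settled via Corollary~\ref{2.5.10}, the case $\ell\geq 1$ by induction on $k$ through Lemma~\ref{2.5.20} (with the $u_q\mapsto -v_q$, $v_q\mapsto -u_q$ substitution for $c_{p_1}=\emptyset$), and (ii) by the finite intersection property of the closed sets $C_p^*\cap c_p^*$ in a quasi-compact space, which is exactly what the paper's one-line deduction of (ii) from (i) means. You in fact supply details the paper leaves implicit --- notably the observation that $C_{p_1}=\emptyset$ rules out the first alternative of Lemma~\ref{2.5.20}, (ii), and that the infima $m_q$, $q\in I$, are retained at every inductive step --- and your $\ell=0$ case absorbs the paper's separate treatment of finite chains, since the largest element of a finite chain lies in every $c_{p_i}\cap C_{p_i}$.
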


\begin{proof}  {\rm (i)} If $C$ is a finite $R$-chain, then
its largest element is a member of the intersection
$\cap_{i=1}^sC_i\cap c_i$.

Now, let us suppose that the $R$-chain $C$ is infinite. In case all
sets $c_1$, $C_1$, $\ldots$, $c_s$, $C_s$, are nonempty
Corollary~\ref{2.5.10} implies that their intersection is not empty,
hence~(\ref{2.5.60}) holds. Otherwise, using Lemma~\ref{2.5.20} and
induction with respect to $k$, we are done.

{\rm (ii)} Since $X$ is quasi-compact, part {\rm (i)} implies part
{\rm (ii)}.

\end{proof}

\begin{corollary}\label{2.5.65} If $X$ is a quasi-compact and
sequentially compact space, then the preordered set $X$ is
inductive.

\end{corollary}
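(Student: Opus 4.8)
The plan is to read off the corollary directly from Proposition~\ref{2.5.50}, {\rm (ii)}, which already contains all the hard analysis; what remains is a purely formal translation of set membership into the preorder $R$. To prove that $X$ is inductive I must exhibit, for an arbitrary chain $C\subset X$, an upper bound, that is, an element $m\in X$ with $xRm$ for every $x\in C$.

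First I would dispose of the trivial case: if $C=\emptyset$ then any point of $X$ is vacuously an upper bound. So assume $C$ is a nonempty chain. Because $X$ is both quasi-compact and sequentially compact, Proposition~\ref{2.5.50}, {\rm (ii)}, applies and yields
\[
\cap_{p\in I}C_p^*\cap c_p^*\neq\emptyset.
\]
I would then choose any element $m$ in this intersection and convert its membership into a statement about $R$.

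The key --- and essentially only --- step is to unwind the definitions of $C_p^*$ and $c_p^*$. By definition $m\in C_p^*$ means $u_p(m)=M_p=\sup_{x\in C}u_p(x)$, whence $u_p(x)\leq u_p(m)$ for every $x\in C$; likewise $m\in c_p^*$ means $v_p(m)=m_p=\inf_{x\in C}v_p(x)$, whence $v_p(x)\geq v_p(m)$ for every $x\in C$. Since $m$ lies in the intersection taken over all indices, these two inequalities hold simultaneously for every $p\in I$ and every $x\in C$, which is exactly the defining condition~(\ref{2.1.1}) for $xRm$. Hence $m$ is an upper bound of $C$, every chain has an upper bound, and $X$ is inductive.

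I expect no serious obstacle here: the passage from $\cap_{p\in I}C_p^*\cap c_p^*$ to an $R$-upper bound is a one-line verification, and all the compactness work needed to guarantee that this intersection is nonempty has already been done in Lemmas~\ref{2.5.5} and~\ref{2.5.20} and in Proposition~\ref{2.5.50}. The only points demanding the slightest care are confirming that the inequalities point in the correct directions for the $u_p$'s and the $v_p$'s --- so that $m$, which lies in $X$ but need not belong to $C$, genuinely dominates every element of $C$ --- and handling the harmless empty-chain case.
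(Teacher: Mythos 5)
Your proof is correct and takes essentially the same route as the paper: the paper's proof likewise takes an arbitrary element of $\cap_{p\in I}C_p^*\cap c_p^*$ supplied by Proposition~\ref{2.5.50}, {\rm (ii)}, and declares it an upper bound of the chain $C$. Your explicit unwinding of the definitions of $C_p^*$ and $c_p^*$ (and the remark on the empty chain) merely spells out what the paper leaves as a one-line verification.
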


\begin{proof} Every element $x^*\in\cap_{p\in I} C_p^*\cap
c_p^*$ is an upper bound of the $R$-chain $C$, hence the
preordered set $X$ is inductive.

\end{proof}

Now, Corollary~\ref{2.5.65} and Kuratowski-Zorn Theorem yield the
following:

\begin{theorem} \label{2.5.70} Let $X$ be a quasi-compact and
sequentially compact space. For any element $x\in X$ there exists an
$R$-maximal element $y\in X$ with $xRy$.

\end{theorem}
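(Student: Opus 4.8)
The plan is straightforward once Corollary~\ref{2.5.65} is in hand. The theorem asserts that for every $x \in X$ there is an $R$-maximal $y$ with $xRy$, and the natural tool is the Kuratowski--Zorn Theorem applied not to all of $X$ but to the upper set of $x$. First I would fix an arbitrary $x \in X$ and consider the subset
\[
X_x = \{z \in X \mid xRz\},
\]
equipped with the restriction of the preorder $R$. This set is nonempty since $xRx$ by reflexivity of the preorder. The goal is to produce a maximal element of $X_x$, because any element that is $R$-maximal within $X_x$ will in fact be $R$-maximal in all of $X$: if $y \in X_x$ and $yRw$ for some $w \in X$, then transitivity gives $xRw$, so $w \in X_x$, and maximality of $y$ inside $X_x$ forces $wRy$. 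Thus maximality relative to the upper set upgrades automatically to global maximality, and we simultaneously retain $xRy$.

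Next I would verify that $X_x$ is inductive, so that Kuratowski--Zorn applies. Here the cleanest route is to observe that $X_x$ inherits the hypotheses needed for Corollary~\ref{2.5.65}. Any $R$-chain $C \subset X_x$ is also an $R$-chain in $X$, so by Corollary~\ref{2.5.65} (using that $X$ is quasi-compact and sequentially compact) there exists an upper bound $x^* \in \cap_{p\in I} C_p^* \cap c_p^*$ for $C$ in $X$. The point to check is that this upper bound lies in $X_x$: picking any $z \in C$ (the chain is nonempty, or else any element of $X_x$ bounds the empty chain), we have $xRz$ and $zRx^*$, whence $xRx^*$ by transitivity, so indeed $x^* \in X_x$. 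Therefore every chain in $X_x$ has an upper bound in $X_x$, i.e.\ $X_x$ is inductive.

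Finally, Kuratowski--Zorn yields a maximal element $y$ of the inductive preordered set $X_x$, and by the upgrade argument of the first paragraph this $y$ is $R$-maximal in $X$ and satisfies $xRy$, completing the proof. The only subtlety worth flagging is that Kuratowski--Zorn is usually stated for partial orders, whereas $R$ is merely a preorder; but this causes no difficulty because one may either pass to the quotient poset $X_x/E$ by the symmetric part $E$ and apply the classical statement there, or simply invoke the standard preorder version in which a ``maximal'' element $y$ means $yRw \Rightarrow wRy$. I expect the main (though modest) obstacle to be exactly this bookkeeping around the preorder-versus-partial-order distinction and the verification that the upper bound from Corollary~\ref{2.5.65} stays inside the upper set $X_x$; the heavy analytic lifting has already been discharged by Proposition~\ref{2.5.50} and its corollary.
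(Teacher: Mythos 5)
Your proof is correct and follows essentially the same route as the paper, which simply invokes Corollary~\ref{2.5.65} together with the Kuratowski--Zorn Theorem; your restriction to the upper set $X_x=\{z\in X\mid xRz\}$ is precisely the standard way of extracting the ``dominating maximal element'' form of Zorn that the paper leaves implicit. The details you supply --- transitivity keeping the chain's upper bound inside $X_x$, the upgrade from maximality in $X_x$ to maximality in $X$, and the preorder-versus-poset bookkeeping via the quotient by $E$ --- are exactly the right ones and are all sound.
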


\subsection{Examples}

\label{1.30}

Since the $(n-1)$-dimensional simplex $\Delta_{n-1}$ is a compact
set in $\hbox{\ccc R}^n$, it is a quasi-compact and sequentially
compact topological space. In case the family $u$ consists of one
function $u(x)$ --- the expected return of the portfolio $x$ and
the family $v$ consists of one function $v(x)$ --- its volatility,
using Theorem~\ref{2.5.70}, we obtain the existence of Markowitz
efficient portfolios and something more: Any portfolio is
$R$-dominated by a Markowitz efficient portfolio.

Moreover, replacing the simplex $\Delta_{n-1}$ with a closed ball
$B_{n-1}$ in the affine hyperplane $\sum_{i=1}^nx_i=1$ in
$\hbox{\ccc R}^n$, such that $\Delta_{n-1}\subset B_{n-1}$, we admit
bounded negative $x_i$'s (that is, constrained \emph{short sales})
and again Theorem~\ref{2.5.70} assures existence of Markowitz
efficient portfolios which dominate any given portfolio.

Below, we remind some notions from statistics and give examples of
application of Theorem~\ref{2.5.70}.

Given the integer $\ell\geq 2$, the \emph{$\ell$-th central moment}
of the random variable $s(x)$ is $E((s(x)-E(s(x)))^\ell)$. The
standard variance is the second central moment
$v(x)=E((s(x)-E(s(x)))^2)$ of $s(x)$ and it is a quadratic form in
$x_1,\ldots, x_n$. The third central moment $E((s(x)-E(s(x)))^3)$ is
a cubic form and the fourth central moment $E((s(x)-E(s(x)))^4)$ is
a form of degree $4$ in $x_1,\ldots, x_n$.

Given $x\in\Delta_{n-1}$ and $t\in\hbox{\ccc R}$, we set
$F_x(t)=P(\{m\in S\mid s(x)(m)<t\})$, so $F_x\colon\hbox{\ccc R}\to
[0,1]$ is the \emph{cumulative distribution function} of the random
variable $s(x)$. We assume that $s(x)$ is a continuous random
variable with \emph{density function} $f_x(t)$, so
$F_x(t)=\int_{-\infty}^tf_x(\tau)d\!\tau$ and $F'_x(t)=f_x(t)$. In
particular, the functions $F_x(t)$ are continuous.

We define recursively $D_x^{\left(1\right)}(t)=F_x(t)$,
$D_x^{\left(2\right)}(t)=\int_{-\infty}^tF_x(\tau)d\!\tau$,$\ldots,$
$D_x^{\left(\ell\right)}(t)=\int_{-\infty}^tD_x^{\left(\ell-1\right)}
(\tau)d\!\tau$, $\ldots$.

The portfolio $x\in\Delta_{n-1}$ is said to be \emph{$\ell$-th
order stochastically dominated} by portfolio $y\in\Delta_{n-1}$ if
$D_y^{\left(\ell\right)}(t)\leq D_x^{\left(\ell\right)}(t)$ for
all $t\in\hbox{\ccc R}$. In case the previous inequalities hold
and $D_y^{\left(\ell\right)}(t) < D_x^{\left(\ell\right)}(t)$ for
some $t\in\hbox{\ccc R}$, $x$ is said to be \emph{$\ell$-th order
strictly stochastically dominated} by $y$.

We set
\[
\skew(s(x))=\frac{E((s(x)-E(s(x)))^3)}{\var(s(x))^{\frac{3}{2}}}
\]
 to be the \emph{skewness} and
\[
\kurt(s(x))=\frac{E((s(x)-E(s(x)))^4)}{\var(s(x))^2}-3
\]
to be the \emph{kurtosis}, or, \emph{excess kurtosis} of the random
variable $s(x)$.

If the random variable $s(x)$ is normal, then $\skew(s(x))=
\kurt(s(x))=0$.

\begin{example}\label{1.30.1} {\rm In case $I=\{1\}$, $J=\emptyset$,
the function $u=u_1$ can be considered as an utility function on
$\Delta_{n-1}$ and $R$ is the corresponding preference relation with
negatively transitive asymmetric part $F$.

}

\end{example}

\begin{example}\label{1.30.10} {\rm In case $I=\{1\}$, $J=\{1\}$,
\[
u_1(x)=E(s(x)),
\]
\[
v_1(x)=\var(s(x)),
\]
we obtain the classical Markowitz setup.

}

\end{example}

\begin{example}\label{1.30.20} {\rm In case
\[
u_1(x)=E(s(x)),
\]
\[
v_1(x)=\var(s(x)),\hbox{\ } v_2(x)=\skew^2(s(x)),
\]
we simultaneously maximize the expected return $E(s(x))$ and
minimize the volatility $\var(s(x))$ and the absolute value of the
skewness $\skew(s(x))$ of the return $s(x)$ of the portfolio $x$.

}

\end{example}

\begin{example}\label{1.30.25} {\rm In case
\[
u_1(x)=E(s(x)),
\]
\[
v_1(x)=\var(s(x)),\hbox{\ } v_2(x)=\kurt^2(s(x)),
\]
we simultaneously maximize the expected return $E(s(x))$ and
minimize the volatility $\var(s(x))$ and the absolute value of the
kurtosis $\kurt(s(x))$ of the return $s(x)$, thus balancing the
tails of its distribution.

}

\end{example}

\begin{example}\label{1.30.30} {\rm In case
\[
u_1(x)=E(s(x)),
\]
\[
v_1(x)=\var(s(x)),\hbox{\ } v_2(x)=\skew^2(s(x)),\hbox{\
}v_3(x)=\kurt^2(s(x)),
\]
we simultaneously maximize the expected return $E(s(x))$ and
minimize the volatility $\var(s(x))$, the the absolute value of the
skewness $\skew(s(x))$, and the absolute value of the kurtosis
$\kurt(s(x))$ of the return $s(x)$. In this way we balance both the
tails of the distribution of $s(x)$ and "round" the maximum of its
density function $f_x(t)$.

}

\end{example}

\begin{example}\label{1.30.40} {\rm  In case
\[
v_t(x)=D_x^{\left(\ell\right)}(t),\hbox{\ } t\in\hbox{\ccc R},
\]
we maximize the $\ell$-th order stochastic dominance, $\ell\geq 1$.

}

\end{example}

\begin{example}\label{1.30.45} {\rm  In case
\[
u(x)=E(s(x)),
\]
\[
v(x)=\var(s(x)),\hbox{\ }
v_t(x)=D_x^{\left(\ell\right)}(t),\hbox{\ } t\in\hbox{\ccc R},
\]
we simultaneously maximize the expected return $E(u(x))$ and the
$\ell$-th order stochastic dominance, $\ell\geq 1$, and minimize the
volatility $\var(s(x))$.

}

\end{example}

\begin{example}\label{1.30.50} {\rm  Let $X$ be a quasi-compact and
sequentially compact space and let $f\colon X\times X\to \hbox{\ccc
R}$ be a continuous real function. For any $p\in X$ we set
\[
u_p(x)=f(x,p),\hbox{\ }x\in X,
\]
\[
v_p(y)=f(p,y),\hbox{\ }y\in X.
\]
Further, for any $x\in X$ we set
\[
U_x^{\left (\geq\right)}=\{y\in X\mid f\left(y,p\right)\geq
f\left(x,p\right)\hbox{\rm\ for all\ }p\in X\},
\]
\[
V_x^{\left (\leq\right)}=\{y\in X\mid f\left(p,y\right)\leq
f\left(p,x\right)\hbox{\rm\ for all\ }p\in X\},
\]
and for any $x,p\in X$ we set
\[
U_x^{\left (\hat{p};\geq\right)}=\{y\in X\mid f\left(y,q\right)\geq
f\left(x,q\right)\hbox{\rm\ for all\ }q\in X,q\neq p\},
\]
\[
V_x^{\left (\hat{p};\leq\right)}=\{y\in X\mid f\left(q,y\right)\leq
f\left(q,x\right)\hbox{\rm\ for all\ }q\in X,q\neq p\}.
\]
Note that $U_x^{\left (\geq\right)}$, $V_x^{\left (\leq\right)}$,
$U_x^{\left (\hat{p};\geq\right)}$, $V_x^{\left
(\hat{p};\leq\right)}$, are closed subsets of $X$ and that
\[
x\in U_x^{\left (\geq\right)}\subset U_x^{\left
(\hat{p};\geq\right)},\hbox{\ } x\in V_x^{\left
(\leq\right)}\subset V_x^{\left (\hat{p};\leq\right)}
\]
for all $x,p\in X$. According to Theorem~\ref{2.5.70}, there
exists an element $m\in X$, such that for any $p\in X$ one has
\[
f(m,p)=\max_{y\in U_m^{\left (\hat{p};\geq\right)}\cap V_m^{\left
(\leq\right)}}f(y,p)
\]
and
\[
f(p,m)=\min_{y\in U_m^{\left (\geq\right)}\cap V_m^{\left
(\hat{p};\leq\right)}}f(p,y).
\]

}

\end{example}

\section*{Acknowledgement}

\label{3}

The friendship of Prof. Fernando Fernandez Rodrigues, Department of
Quantitative Methods on Economics and Management, Las Palmas de Gran
Canaria University, is much appreciated and has led to many
interesting and good-spirited discussions relating to this research.


\begin{thebibliography}{8}


\bibitem{[5]} H.~Markowitz, Portfolio Selection, The Jornal of
Finance, vol. 7, No 1, (1952), 77 - 91.

\bibitem{[10]} S.~Roman, \emph{Introduction to the Mathematics of
Finance: From Risk Management to Options Pricing},
Springer-Verlag, 2004.

\bibitem{[15]} M.~M.~Postnikov, \emph{Introduvction to Morse
Theory}, Moskow, Nauka 1971, in Russian.


\end{thebibliography}
\end{document}